\font\smallit=cmti10
\font\smalltt=cmtt10
\renewcommand\section{\@startsection {section}{1}{\z@}
{-30pt \@plus -1ex \@minus -.2ex}
{2.3ex \@plus.2ex}
{\normalfont\normalsize\bfseries\boldmath}}
\renewcommand\subsection{\@startsection{subsection}{2}{\z@}
{-3.25ex\@plus -1ex \@minus -.2ex}
{1.5ex \@plus .2ex}
{\normalfont\normalsize\bfseries\boldmath}}
\renewcommand{\@seccntformat}[1]{\csname the#1\endcsname. }
\newtheorem{theorem}{Theorem}
\newtheorem{lemma}{Lemma}
\newtheorem{corollary}{Corollary}
\theoremstyle{definition}
\newtheorem{remark}{Remark}
\newtheorem{example}{Example}
\begin{document}

\begin{center}
\uppercase{\bf \boldmath A Dilogarithm Series and a Generalization of a Theorem~of Bridgeman}
\vskip 20pt
{\bf Chance Sanford}\\
{\tt sanfordchance@gmail.com}\\
\vskip 10pt
\end{center}
\vskip 20pt

\centerline{\smallit Received: , Revised: , Accepted: , Published: } 
\vskip 30pt

\centerline{\bf Abstract}
\noindent
Using Abel’s five-term relation, we derive a new two-parameter series identity for the Rogers dilogarithm. By specializing this identity, we obtain dilogarithm series involving Lucas sequences. These results generalize certain series identities of Bridgeman related to solutions of Pell’s equations, which were obtained via a completely different approach.

\pagestyle{myheadings}
\markright{\smalltt INTEGERS: 25 (2025)\hfill}
\thispagestyle{empty}
\baselineskip=12.875pt
\vskip 30pt

\section{Introduction}
Let $\mathrm{Li}_2(x)$ denote the dilogarithm, which we define for $0 \leq x \leq 1$ as
\begin{equation*}
    \mathrm{Li}_2(x) := \sum_{n=1}^{\infty}\frac{x^n}{n^2} = -\int_{0}^{x}\frac{\log(1-t)}{t}\mathrm{d}t.
\end{equation*}
The subject of this paper is a variant of $\mathrm{Li}_2(x)$, called the Rogers dilogarithm.  Denoted by $\mathcal{L}(x)$, the Rogers dilogarithm is defined for $0 < x < 1$ by 
\begin{align*}
\mathcal{L}(x) &:= \mathrm{Li}_2(x) + \frac{1}{2}\log(x)\log(1-x) \\
&= -\frac{1}{2}\int_{0}^{x}\left(\frac{\log(1-t)}{t}+\frac{\log(t)}{1-t}\right)\mathrm{d}t,
\end{align*}
and then extended to $0 \leq x \leq 1$ by setting $\mathcal{L}(0) := 0$ and $\mathcal{L}(1) := \pi^2/6$.  

Although not as widely known as other transcendental functions, the dilogarithm and its variants appear in a surprising number of contexts, from number theory and hyperbolic geometry to conformal field theory in mathematical physics.  A good introduction to the dilogarithm and its many applications can be found in the survey by Zagier \cite{zagier2007dilogarithm} and the first section of the article by Krillov \cite{kirillov1995dilogarithm}.

One interesting feature of the dilogarithm is that it satisfies a seemingly endless array of functional equations.  Of particular importance to us are the reflection identity,
\begin{equation}
    \mathcal{L}(x) + \mathcal{L}(1-x) = \frac{\pi^2}{6}, \label{eq:two_term}
\end{equation}
valid for $ 0 \leq x \leq 1$ and Abel's five-term relation,
\begin{equation}
    \mathcal{L}(x) + \mathcal{L}(y) = \mathcal{L}(xy) + \mathcal{L}\left(\frac{x(1-y)}{1-xy}\right) + \mathcal{L}\left(\frac{y(1-x)}{1-xy}\right), \label{eq:abel}
\end{equation}
which holds for all $0 < x,y < 1$.  The following special values may be deduced from these relations and the definition given above: 
\begin{equation*}
    \mathcal{L}\left(0\right) = 0, \quad \mathcal{L}\left(1\right) = \frac{\pi^2}{6}, \quad \mathcal{L}\left(\frac{1}{2}\right) = \frac{\pi^2}{12},\quad \mathcal{L}\left(\frac{1}{\varphi}\right) = \frac{\pi^2}{10}, \quad \mathcal{L}\left(\frac{1}{\varphi^2}\right) = \frac{\pi^2}{15},
\end{equation*}
where $\varphi = (1+\sqrt{5})/2$ is the golden ratio.  Curiously, these five values represent the only known instances where the Rogers dilogarithm defined on the unit interval may be evaluated in closed-form.    

The Rogers dilogarithm also possesses a number of beautiful series identities, of which the Richmond-Szekeres identity \cite{Richmond_Szekeres_1981} is the prototypical example: 
\begin{equation*}
    \sum_{n=2}^{\infty}\mathcal{L}\left(\frac{1}{n^2}\right) = \frac{\pi^2}{6}.
\end{equation*}
Hoorfar and Qi \cite{hoorfar2009sums} set out to generalize the Richmond-Szekeres identity, proving a number of new series identities by utilizing Abel's five-term functional equation for the Rogers dilogarithm.  A representative example from that work is the following theorem.
\begin{theorem}[Hoorfar and Qi \cite{hoorfar2009sums}]
For $p,q \in \mathbb{N}$ and $0 < \theta, \beta < 1$,
\begin{multline*}
    \sum_{n=0}^{\infty}\mathcal{L}\left(\frac{\beta(1-\theta^p)(1-\theta^q)\theta^n}{(1-\beta\theta^{n+p})(1-\beta\theta^{n+q})}\right) = \sum_{n=0}^{q-1}\mathcal{L}\left(\frac{\beta(1-\theta^p)\theta^n}{1-\beta\theta^{n+p}}\right) \\ + \sum_{n=0}^{p-1}\mathcal{L}\left(\frac{1-\theta^q}{1-\beta\theta^{n+q}}\right) - p \mathcal{L}(1-\theta^q).
\end{multline*}
\end{theorem}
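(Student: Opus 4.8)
The plan is to apply Abel's five-term relation \eqref{eq:abel} once for each index $n$, choosing the two free parameters so that the $n$-th summand on the left appears as the product term $\mathcal{L}(xy)$, while the two ``mixed'' terms collapse into index-shifted copies of the remaining two. Concretely, for fixed $p,q$ and $0<\theta,\beta<1$ I would set
\[
x_n := \frac{\beta\theta^n(1-\theta^p)}{1-\beta\theta^{n+p}}, \qquad y_n := \frac{1-\theta^q}{1-\beta\theta^{n+q}},
\]
and first verify that $x_n,y_n\in(0,1)$ for every $n\ge 0$; each reduces to the inequality $\beta\theta^n<1$, so \eqref{eq:abel} is legitimately applicable. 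A short computation then confirms that $x_ny_n$ is exactly the argument of $\mathcal{L}$ in the $n$-th term of the left-hand series.

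The heart of the argument is the evaluation of the two other terms of \eqref{eq:abel}. I would record $1-x_n=(1-\beta\theta^n)/(1-\beta\theta^{n+p})$ and $1-y_n=\theta^q(1-\beta\theta^n)/(1-\beta\theta^{n+q})$, and, after expanding the product in the numerator, the key factorization
\[
1-x_ny_n=\frac{(1-\beta\theta^n)(1-\beta\theta^{n+p+q})}{(1-\beta\theta^{n+p})(1-\beta\theta^{n+q})}.
\]
Feeding these into the expressions $x_n(1-y_n)/(1-x_ny_n)$ and $y_n(1-x_n)/(1-x_ny_n)$, the common factor $1-\beta\theta^n$ cancels and the two quantities simplify precisely to $x_{n+q}$ and $y_{n+p}$. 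Thus for each $n$ the five-term relation takes the shifted form
\[
\mathcal{L}(x_ny_n)=\mathcal{L}(x_n)+\mathcal{L}(y_n)-\mathcal{L}(x_{n+q})-\mathcal{L}(y_{n+p}).
\]

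With this relation established, the remainder is a telescoping computation. Summing over $n=0,1,\dots,N$ and reindexing, the $x$-contributions collapse to $\sum_{n=0}^{q-1}\mathcal{L}(x_n)-\sum_{n=N+1}^{N+q}\mathcal{L}(x_n)$ and the $y$-contributions to $\sum_{n=0}^{p-1}\mathcal{L}(y_n)-\sum_{n=N+1}^{N+p}\mathcal{L}(y_n)$. Letting $N\to\infty$ and invoking the continuity of $\mathcal{L}$ on $[0,1]$ together with $\mathcal{L}(0)=0$: since $x_n\to 0$ the first tail vanishes, whereas $y_n\to 1-\theta^q$ makes the second tail tend to $p\,\mathcal{L}(1-\theta^q)$, producing the term $-p\,\mathcal{L}(1-\theta^q)$. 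The surviving finite sums $\sum_{n=0}^{q-1}\mathcal{L}(x_n)$ and $\sum_{n=0}^{p-1}\mathcal{L}(y_n)$ are exactly the two finite sums on the right-hand side. Convergence of the left-hand series is not an obstacle, since the arguments $x_ny_n$ decay geometrically and $\mathcal{L}(t)\to 0$ as $t\to 0^{+}$.

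I expect the main difficulty to be computational rather than conceptual: the whole scheme hinges on the factorization of $1-x_ny_n$ and on confirming that the two mixed Abel terms really do reduce to the clean index shifts $x_{n+q}$ and $y_{n+p}$. Once that algebra is in hand, the applicability of \eqref{eq:abel}, the telescoping, and the passage to the limit are all routine, and notably the reflection identity \eqref{eq:two_term} is not even needed.
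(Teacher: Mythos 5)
Your telescoping scheme is sound, and the algebra you identify as the crux does check out: with $x_n = \beta\theta^n(1-\theta^p)/(1-\beta\theta^{n+p})$ and $y_n = (1-\theta^q)/(1-\beta\theta^{n+q})$ one indeed gets $1-x_n = (1-\beta\theta^n)/(1-\beta\theta^{n+p})$, $1-y_n = \theta^q(1-\beta\theta^n)/(1-\beta\theta^{n+q})$, the factorization
\begin{equation*}
1-x_ny_n=\frac{(1-\beta\theta^n)(1-\beta\theta^{n+p+q})}{(1-\beta\theta^{n+p})(1-\beta\theta^{n+q})},
\end{equation*}
and hence the shifted Abel relation $\mathcal{L}(x_ny_n)=\mathcal{L}(x_n)+\mathcal{L}(y_n)-\mathcal{L}(x_{n+q})-\mathcal{L}(y_{n+p})$; the bounds $x_n,y_n\in(0,1)$, the telescoping, and the limits $x_n\to 0$, $y_n\to 1-\theta^q$ (producing $-p\,\mathcal{L}(1-\theta^q)$) are all correct. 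Note that the paper itself offers no proof of this statement---it is quoted from Hoorfar and Qi---but your argument is exactly the Abel-plus-telescoping strategy that both Hoorfar--Qi and the paper's own proof of Theorem \ref{thm:main} (via Lemma \ref{lemma:main}) employ, so in method there is nothing to criticize.

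There is, however, a genuine mismatch in your final step, which you assert rather than check: the surviving sum $\sum_{n=0}^{q-1}\mathcal{L}(x_n)$ is $\sum_{n=0}^{q-1}\mathcal{L}\bigl(\beta\theta^n(1-\theta^p)/(1-\beta\theta^{n+p})\bigr)$, whereas the first sum in the statement as printed is $\sum_{n=0}^{q-1}\mathcal{L}\bigl(\beta(1-\theta^p)/(1-\beta\theta^{n+p})\bigr)$, with no factor $\theta^n$. These differ as soon as $q\geq 2$, so your proof does not establish the printed identity. In fact the printed identity is false and the identity you derived is the correct one: taking $p=1$, $q=2$, $\theta=\beta=1/2$, the left-hand series evaluates numerically to about $1.0560$, your right-hand side equals $\mathcal{L}(1/3)+\mathcal{L}(1/7)+\mathcal{L}(6/7)-\mathcal{L}(3/4)\approx 1.0560$, while the printed right-hand side equals $\mathcal{L}(1/3)+\mathcal{L}(2/7)+\mathcal{L}(6/7)-\mathcal{L}(3/4)\approx 1.2777$. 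The paper has evidently dropped the factor $\theta^n$ in transcribing Hoorfar--Qi's theorem (the asymmetry between the two finite sums is forced precisely because $x_n\to 0$ carries $\theta^n$ while $y_n\to 1-\theta^q$ does not). So your argument proves the correct, original form of the theorem; the flaw in your write-up is only that you claimed the telescoped sums match the printed right-hand side ``exactly'' instead of noticing and flagging the discrepancy.
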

Using a different approach, Bridgeman established a beautiful connection between solutions of Pell's equations, their associated continued fractions, and series involving the Rogers dilogarithm \cite{bridgeman2021dilogarithm}.  These results were obtained using tools from hyperbolic geometry, which were developed in Bridgeman's previous work \cite{bridgeman2011orthospectra}.  Very recently, Bridgeman's ideas were expanded upon by Jaipong et al. \cite{jaipong2023}, who discovered many more series identities for the Rogers dilogarithm.  

Although this paper does not utilize any results from hyperbolic geometry, in order to give the reader an idea of the approach taken by Bridgeman \cite{bridgeman2021dilogarithm} and Jaipong et al. \cite{jaipong2023}, we state Bridgeman's \textit{orthospectrum identity} \cite{bridgeman2011orthospectra}, which was the starting point for both sets of authors.  First, we recall that a hyperbolic surface is a complete, two-dimensional Riemannian manifold equipped with a metric of constant negative curvature.  Roughly speaking, the orthospectrum identity relates the lengths of certain geodesic arcs on a hyperbolic surface, called orthogeodesics, to particular properties of that surface.  More precisely, the orthospectrum identity is given by the following statement.
\begin{theorem}[Bridgeman \cite{bridgeman2011orthospectra}]
    Let $S$ denote a hyperbolic surface of finite area with totally geodesic boundary $\partial S \neq \emptyset$, and $N(S)$ boundary cusps.  In addition, let $O(S)$ be the set of orthogeodesics in $S$, that is, the set of geodesic arcs with endpoints on $\partial S$ and perpendicular to $\partial S$ at both ends. Then 
    \begin{equation*}
        \sum_{\alpha \in O(S)}\mathcal{L}\left(\frac{1}{\cosh^2(\ell(\alpha)/2)}\right) = -\frac{\pi^2}{6} \left(6 \chi(S) + N(S)\right)
    \end{equation*}
    where $\ell(\alpha)$ is the length of $\alpha$, and $\chi (S)$, the Euler characteristic of $S$, satisfies $\chi(S) = -\mathrm{Area}(S)/2\pi$.
\end{theorem}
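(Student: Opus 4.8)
The plan is to prove the identity geometrically, by decomposing the Liouville measure of the unit tangent bundle $T^1 S$ along the orthogeodesics of $S$ and matching each piece to a value of the Rogers dilogarithm. I would first write $S = \mathbb{H}^2/\Gamma$ with $\Gamma \cong \pi_1(S)$, so that $\partial S$ lifts to a $\Gamma$-invariant family of pairwise disjoint complete geodesics bounding a convex region $\widetilde{S} \subset \mathbb{H}^2$. The key structural observation is that to almost every $v \in T^1 S$ one associates the maximal geodesic arc $\beta_v \subset S$ through $v$ with both endpoints on $\partial S$; since hyperbolic surfaces contain no geodesic bigons, $\beta_v$ is essential whenever it does not run out a cusp, and is therefore freely homotopic rel $\partial S$ to a unique orthogeodesic $\alpha(v) \in O(S)$. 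The vectors $v$ for which $\beta_v$ fails to exist lie on complete geodesics with both endpoints in the limit set $\Lambda$ of $\Gamma$; since $S$ has nonempty boundary, $\Gamma$ is of the second kind and $\Lambda$ is a Lebesgue-null subset of $\partial \mathbb{H}^2$, so these vectors form a Liouville-null set. This yields a decomposition $T^1 S = \bigsqcup_{\alpha \in O(S)} V_\alpha$ up to a null set, where $V_\alpha = \{ v : \alpha(v) = \alpha \}$, and hence $\mathrm{Vol}(T^1 S) = \sum_{\alpha \in O(S)} \mathrm{Vol}(V_\alpha)$.

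The second step is to compute each local volume $\mathrm{Vol}(V_\alpha)$ explicitly. I would use the identification of $T^1 \mathbb{H}^2$ with pairs (oriented geodesic $g$, arc-length position $t$ along $g$), under which the Liouville volume factors as $d\mu_{\mathrm{geo}}(g)\, dt$, where $d\mu_{\mathrm{geo}}(g) = dx\,dy/(x-y)^2$ in the endpoint coordinates $(x,y)$ on $\partial \mathbb{H}^2$. Fixing a lift $\widetilde{\alpha}$ of $\alpha$, which is the common perpendicular of two lifted boundary geodesics $g_1, g_2$ at distance $\ell(\alpha)$, the set $V_\alpha$ lifts to the geodesics crossing from $g_1$ to $g_2$, weighted by the length of their chord inside the strip between $g_1$ and $g_2$. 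Normalizing so that $\widetilde{\alpha}$ lies on a fixed axis reduces this to an explicit two-variable integral, whose antiderivative I expect to recognize — via the integral representation of $\mathcal{L}$ — as $\mathrm{Vol}(V_\alpha) = c\,\mathcal{L}\!\left(1/\cosh(\ell(\alpha)/2)\right)$ for an explicit universal constant $c$; the functional equations recorded above can be invoked if needed to bring the argument into exactly this form.

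With the local computation in hand, Gauss--Bonnet supplies the total. In the cusp-free case this already finishes the proof: $\mathrm{Vol}(T^1 S) = 2\pi\,\mathrm{Area}(S) = -4\pi^2 \chi(S)$, and summing the local volumes and inserting the explicit value of $c$ from the integral reproduces the right-hand side $-\pi^2 \chi(S)$, which is exactly the claim when $N(S) = 0$.

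The main obstacle, and the source of the correction term $-\tfrac{\pi^2}{6}N(S)$, is the presence of boundary cusps. At a boundary cusp two lifted boundary geodesics are asymptotic, meeting at a common ideal vertex, and there a positive-measure family of vectors $v$ have geodesic arcs $\beta_v$ that run up into the cusp and back; such an arc is homotopically trivial toward the ideal vertex rather than wrapping an essential orthogeodesic (the no-bigon obstruction disappears because the bigon now has its vertex at infinity), so these vectors belong to no $V_\alpha$ and are silently omitted from $\sum_\alpha \mathrm{Vol}(V_\alpha)$. I would handle this by excising a standard horoball neighborhood of each cusp, running the decomposition on the truncation, and passing to the limit as the horoballs shrink. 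The real work lies in showing that the Liouville volume of the omitted cusp family converges, per cusp, to exactly $c \cdot \tfrac{\pi^2}{6}$, so that the deficit between $\mathrm{Vol}(T^1 S)/c = -\pi^2\chi(S)$ and the orthogeodesic sum is precisely $\tfrac{\pi^2}{6}N(S)$. Pinning down this per-cusp contribution, rather than the bulk chord integral, is where I expect the difficulty to concentrate.
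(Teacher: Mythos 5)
This theorem is not proved in the paper at all: it is quoted as background, attributed to Bridgeman, and the author explicitly states that the present paper ``does not utilize any results from hyperbolic geometry.'' The proof lives in Bridgeman's original work \cite{bridgeman2011orthospectra}, so the only meaningful comparison is with that argument --- and your outline is, in its skeleton, exactly that argument: decompose the Liouville measure of $T^1S$ according to the homotopy class rel $\partial S$ of the maximal boundary-to-boundary arc through each vector, identify each class with its unique orthogeodesic representative, show the exceptional vectors (those whose geodesic never meets $\partial S$, governed by the measure-zero limit set of a second-kind group) are Liouville-null, compute the per-orthogeodesic volume as a dilogarithm, get the total volume from Gauss--Bonnet, and account for the boundary-cusp families separately. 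Your sign logic for the cusp term (a deficit subtracted from $-\pi^2\chi(S)$) is also correct.

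The gap is that the two steps carrying all of the mathematical content are both deferred rather than done. First, the evaluation of the chord-length-weighted integral over geodesics crossing the strip between $g_1$ and $g_2$ --- the step you describe as an antiderivative you ``expect to recognize'' as $c\,\mathcal{L}\bigl(1/\cosh(\ell/2)\bigr)$ --- is precisely where the Rogers dilogarithm, as opposed to $\mathrm{Li}_2$ or some unstructured combination of logarithms, actually enters; carrying it out (setting up the $dx\,dy/(x-y)^2$ integral with the chord-length weight, reducing it, and pinning down the universal constant $c$) constitutes the bulk of Bridgeman's paper and cannot be waved through, since the whole identity hinges on $c$ being the same constant that converts $\mathrm{Vol}(T^1S)=-4\pi^2\chi(S)$ into the stated right-hand side. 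Second, the per-cusp contribution of $c\cdot\pi^2/6$ is asserted as ``the real work'' but never performed; the heuristic that a boundary cusp is the $\ell\to 0$ degeneration of an orthogeodesic, with $\mathcal{L}(1)=\pi^2/6$, is suggestive and consistent, but one still needs either the horoball-truncation limit you propose or a direct integral over the cusp family, together with a proof that the truncated decompositions converge. Until both computations are executed and their constants match simultaneously, the identity is not established; what you have is a correct and well-motivated roadmap of the known proof, not a proof.
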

By applying the orthospectrum identity to various hyperbolic surfaces, Bridgeman \cite{bridgeman2021dilogarithm} and Jaipong et al. \cite{jaipong2023} were able to derive dilogarithm series involving certain sequences.  The results in Bridgeman's paper \cite{bridgeman2021dilogarithm} were expressed in terms of the solutions to Pell's equations $x^2-ny^2=\pm 1$.  Bridgeman identifies a solution $(a,b) \in \mathbb{Q}^2$ to one of Pell's equations, with the quadratic irrational $u := a+b\sqrt{n}$.  Depending on whether $(a,b)$ is a solution to $x^2-ny^2= 1$ or $x^2-ny^2=-1$, $u$ is said to be a positive or negative solution, respectively.  One of Bridgeman's main theorems was the following result, which provides a series expression for $\mathcal{L}(1/u^2)$.
\begin{theorem}[Bridgeman \cite{bridgeman2021dilogarithm}]\label{thm:bridgeman}
    Let $u = a+b\sqrt{n} \in \mathbb{Q}[\sqrt{n}]$, with $a,b > 0$, satisfy Pell's equation. In addition, let $u^k = a_k + b_k \sqrt{n}$.  If $u$ is a positive solution, then
    \begin{equation}
        \mathcal{L}\left(\frac{1}{u^2}\right) = \sum_{k=2}^{\infty}\mathcal{L}\left(\frac{b^2}{b^2_k}\right). \label{eq:bridgeman_pos}
    \end{equation}
    Further, if $u \in \mathbb{Z}[\sqrt{n}]$ then $b_k/b \in \mathbb{Z}$ for all $k$.  If $u$ is a negative solution, then
    \begin{equation}
        \mathcal{L}\left(\frac{1}{u^2}\right) = \sum_{k=1}^{\infty}\mathcal{L}\left(\frac{a^2}{n b^2_{2k}}\right) + \sum_{k=1}^{\infty}\mathcal{L}\left(\frac{a^2}{a^2_{2k+1}}\right). \label{eq:bridgeman_neg}
    \end{equation}
    Further, if $u \in \mathbb{Z}[\sqrt{n}]$ then $b_{2k}/a, a_{2k+1}/a \in \mathbb{Z}$ for all $k$.
\end{theorem}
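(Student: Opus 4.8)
The plan is to reduce both the positive and the negative case to a single analytic series identity for $\mathcal{L}$, and then to prove that identity by specializing the Hoorfar--Qi theorem. Write $\bar{u} = a - b\sqrt{n}$ for the conjugate of $u$, so that $u\bar{u} = a^2 - nb^2 = \pm 1$, and record the closed forms $a_k = (u^k + \bar{u}^k)/2$ and $b_k = (u^k - \bar{u}^k)/(2\sqrt{n})$, which follow at once from $u^k = a_k + b_k\sqrt{n}$ and $\bar{u}^k = a_k - b_k\sqrt{n}$. Throughout set $q := 1/u^2$; since $u > 1$ we have $q \in (0,1)$. The central claim I would isolate is the base identity
\begin{equation}
\mathcal{L}(q) = \sum_{m=2}^{\infty}\mathcal{L}\!\left(\frac{q^{m-1}(1-q)^2}{(1-q^m)^2}\right), \qquad 0 < q < 1, \label{eq:core}
\end{equation}
after which each half of Theorem~\ref{thm:bridgeman} becomes a matter of matching its summands to the terms of \eqref{eq:core}.

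For a positive solution one has $\bar{u} = 1/u$, hence $b_k/b = (u^k - u^{-k})/(u - u^{-1})$. Factoring $u^k - u^{-k} = u^k(1 - q^k)$ and $u - u^{-1} = u(1-q)$ gives $b^2/b_k^2 = q^{k-1}(1-q)^2/(1-q^k)^2$, so the right-hand side of \eqref{eq:bridgeman_pos} is exactly the series in \eqref{eq:core}, and \eqref{eq:bridgeman_pos} follows.

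The negative case is where the real bookkeeping lies, and I expect it to be the main obstacle. Now $\bar{u} = -1/u$, so $\bar{u}^k = (-1)^k u^{-k}$ and the sign forces a parity split: $a = (u - u^{-1})/2$, the even term $b_{2k} = (u^{2k} - u^{-2k})/(2\sqrt{n})$ carries no sign, while the odd term $a_{2k+1} = (u^{2k+1} - u^{-(2k+1)})/2$ does. Carrying out the same factorization I would compute $a^2/(nb_{2k}^2) = q^{2k-1}(1-q)^2/(1-q^{2k})^2$ and $a^2/a_{2k+1}^2 = q^{2k}(1-q)^2/(1-q^{2k+1})^2$; these are precisely the even-indexed ($m = 2k$) and odd-indexed ($m = 2k+1$) terms of \eqref{eq:core}. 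Hence the two sums in \eqref{eq:bridgeman_neg} interleave to reproduce $\sum_{m\ge 2}$, and the negative case collapses to the same identity \eqref{eq:core}. The delicate points are keeping the signs of $\bar{u}^k$ straight (an early sign slip silently turns $(1-q)^2$ into $(1+q)^2$ and destroys the identity) and checking that every argument lies in $(0,1)$, which holds because the excluded index $m=1$ is the only one giving the value $1$; since $\mathcal{L}\ge 0$ on $(0,1)$, the rearrangement merging the even and odd series is legitimate.

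It remains to prove \eqref{eq:core}. Here I would invoke the Hoorfar--Qi theorem with $\theta = \beta = q$ and $p = q = 1$: its left-hand side becomes $\sum_{n\ge 0}\mathcal{L}\big(q^{n+1}(1-q)^2/(1-q^{n+2})^2\big)$, which is \eqref{eq:core} after the shift $m = n+2$. Its right-hand side collapses to $\mathcal{L}\!\big(q/(1+q)\big) + \mathcal{L}\!\big(1/(1+q)\big) - \mathcal{L}(1-q)$; since $q/(1+q)$ and $1/(1+q)$ sum to $1$, the reflection identity \eqref{eq:two_term} turns the first two terms into $\pi^2/6$, and a second application gives $\pi^2/6 - \mathcal{L}(1-q) = \mathcal{L}(q)$, proving \eqref{eq:core}. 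Finally, the integrality claims are elementary: when $a,b\in\mathbb{Z}$ the positive-case ratios $b_k/b = (u^k-\bar{u}^k)/(u-\bar{u})$ are Lucas numbers of the integer recurrence $t^2 - 2at + 1 = 0$, while the negative-case ratios $b_{2k}/a$ and $a_{2k+1}/a$ satisfy the integer three-term recurrence $x_{k+1} = (4a^2+2)x_k - x_{k-1}$ attached to $w := u^2$; in each case integrality follows by induction from integer initial data.
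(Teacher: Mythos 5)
Your proof is correct, but it reaches Theorem \ref{thm:bridgeman} by a genuinely different route than the paper does. The paper never actually uses the Hoorfar--Qi theorem in any proof: it builds its own machinery from Abel's relation \eqref{eq:abel} --- the telescoping two-parameter identity of Theorem \ref{thm:main} (via Lemma \ref{lemma:main}), specialized in Corollary \ref{cor:main} and recast as the Lucas-sequence identities \eqref{eq:lucas_series_pos} and \eqref{eq:lucas_series_neg} --- and then, in its final section, recovers Bridgeman's theorem by identifying $a_k = \tfrac{1}{2}V_k(2a,\pm 1)$, $b_k = b\,U_k(2a,\pm 1)$ and taking $k=1$, $Q=\pm 1$. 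You instead collapse both Pell cases onto the single one-parameter identity $\mathcal{L}(q)=\sum_{m=2}^{\infty}\mathcal{L}\bigl(q^{m-1}(1-q)^{2}/(1-q^{m})^{2}\bigr)$ with $q=1/u^{2}$, the positive case producing all indices $m\ge 2$ directly and the negative case producing them interleaved by parity, and you obtain that identity as the $p=q=1$, $\theta=\beta$ specialization of Hoorfar--Qi (your algebra here checks out, including the legitimacy of merging the two sums, since all summands are nonnegative). Incidentally, your core identity is exactly the $k=1$ repunit example of Theorem \ref{thm:generalized_bridgeman} with $x=1/q$, so the two routes converge on the same underlying series. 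What each approach buys: yours is shorter, and the parity interleaving makes it transparent that \eqref{eq:bridgeman_pos} and \eqref{eq:bridgeman_neg} are one identity in disguise --- the same structural point the paper makes when it shows that \eqref{eq:lucas_series_neg} is a specialization of \eqref{eq:lucas_series_pos} --- but it imports Hoorfar--Qi as a black box (stated in the paper but proved elsewhere) and yields only the $k=1$ statement; the paper's longer route is self-contained from Abel's relation and delivers the full Lucas-sequence generalization for arbitrary $k$ and real parameters $P,Q$. Two details you should state explicitly, though neither is a real gap: $u>1$ (hence $q\in(0,1)$) holds because $u>|a-b\sqrt{n}|$ and $u\,|a-b\sqrt{n}|=1$; and the inductions proving integrality need their integer initial data recorded, e.g. $b_2/b=2a$ in the positive case, and $b_2/a=2b$, $a_3/a=4a^2+3$ in the negative case.
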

The aim of this paper is to explore an alternate approach to dilogarithm identities involving integer sequences, and in particular generalize Theorem~\ref{thm:bridgeman}.  Instead of using the ideas developed by Bridgeman and Jaipong et al., we take the approach of Hoorfar and Qi \cite{hoorfar2009sums}, relying only on elementary manipulations of Abel's functional equation.  Our results begin from a general two-parameter series, which upon specialization allows us to generalize Bridgeman's theorem.  
\section{The Principle Identity}
The principle result of this paper is the following theorem.
\begin{theorem}\label{thm:main}
    Let $a,b \in (0,1)$ be real numbers such that $a \neq b$.  Then
\begin{equation*}
    \sum_{n=0}^{\infty}\mathcal{L}\left(\frac{ab(a-b)^2(1-a)^n(1-b)^n}{(a(1-b)^{n+1}-b(1-a)^{n+1})^2}\right) = \mathcal{L}(a) + \mathcal{L}(b) - \mathcal{L}\left(\frac{\vert{a-b}\vert}{1-\min(a,b)}\right).
\end{equation*}
\end{theorem}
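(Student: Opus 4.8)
The plan is to realize the left-hand series as a telescoping sum produced by a single application of Abel's relation \eqref{eq:abel} at each index $n$. Since both sides of the identity are invariant under interchanging $a$ and $b$ (the numerator of the summand is symmetric and its denominator is squared, while $|a-b|$ and $\min(a,b)$ are manifestly symmetric), I would first assume without loss of generality that $a>b$, so that $|a-b|=a-b$ and $\min(a,b)=b$, and the target right-hand side becomes $\mathcal{L}(a)+\mathcal{L}(b)-\mathcal{L}\!\left(\tfrac{a-b}{1-b}\right)$.

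The crux is to guess the correct telescoping sequences. Setting $r:=\frac{1-a}{1-b}\in(0,1)$ and $\delta:=\frac{a-b}{1-b}$, one checks that $\delta=1-r$ and that the $n$-th summand simplifies to $t_n=\frac{\delta^2 X_n}{(1-rX_n)^2}$, where $X_n:=\frac{b}{a}\,r^n$ satisfies $X_{n+1}=rX_n$ and $X_n\to 0$. I would then introduce $u_n:=\frac{\delta}{1-rX_n}$ and $v_n:=\frac{\delta X_n}{1-rX_n}$, the key point being that $t_n=u_nv_n$. A short computation gives the boundary values $u_0=a$ and $v_0=b$, while $u_n\to\delta$ and $v_n\to 0$ as $n\to\infty$.

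With these in hand I would apply \eqref{eq:abel} with $x=u_n$ and $y=v_n$. The product term is $u_nv_n=t_n$, and the two mixed terms should collapse onto the shifted sequences: using $\delta=1-r$ to factor $1-u_nv_n=\frac{(1-X_n)(1-r^2X_n)}{(1-rX_n)^2}$, one finds $\frac{u_n(1-v_n)}{1-u_nv_n}=\frac{\delta}{1-r^2X_n}=u_{n+1}$ and $\frac{v_n(1-u_n)}{1-u_nv_n}=\frac{\delta rX_n}{1-r^2X_n}=v_{n+1}$. Abel's relation then reads $\mathcal{L}(t_n)=\bigl(\mathcal{L}(u_n)+\mathcal{L}(v_n)\bigr)-\bigl(\mathcal{L}(u_{n+1})+\mathcal{L}(v_{n+1})\bigr)$, so the partial sums telescope to $\mathcal{L}(u_0)+\mathcal{L}(v_0)-\mathcal{L}(u_{N+1})-\mathcal{L}(v_{N+1})$. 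Since $0<v_n<u_n<1$ throughout (both inequalities reduce to $X_n<1$), every application of \eqref{eq:abel} is legitimate, and since $t_n=O(r^n)$ the series converges. Letting $N\to\infty$ and using the continuity of $\mathcal{L}$ on $[0,1]$ with $u_{N+1}\to\delta$, $v_{N+1}\to 0$ then yields $\mathcal{L}(a)+\mathcal{L}(b)-\mathcal{L}(\delta)$, the claimed value.

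The main obstacle is one of discovery rather than difficulty: locating the factorization $t_n=u_nv_n$ with $u_n,v_n$ chosen so that Abel's two mixed terms regenerate $u_{n+1},v_{n+1}$. Once the sequences are written down, the verification is a routine algebraic simplification resting on $\delta=1-r$, and the analytic points (positivity, convergence, passage to the limit) are straightforward. As a consistency check and alternative route, I note that under $\theta=r$, $\beta=b/a$ the statement is exactly the $p=q=1$ specialization of the Hoorfar--Qi theorem quoted above, with $\frac{\beta(1-\theta)}{1-\beta\theta}=b$, $\frac{1-\theta}{1-\beta\theta}=a$, and $1-\theta=\delta$; so one could alternatively deduce it by citing that result directly.
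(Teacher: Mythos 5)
Your proof is correct and follows essentially the same route as the paper: your $u_n,v_n$ are precisely the paper's telescoping sequences $x_n,y_n$ rewritten in the variables $r=\tfrac{1-a}{1-b}$, $\delta=1-r$, $X_n=\tfrac{b}{a}r^n$, and the argument---verify that Abel's mixed terms reproduce the shifted sequences, telescope, and evaluate the limits $u_n\to\delta$, $v_n\to 0$---is identical, with your symmetry reduction to $a>b$ merely replacing the paper's two-case limit computation. Your closing remark that the identity is also the $p=q=1$ specialization of the quoted Hoorfar--Qi theorem is a valid observation that the paper itself never makes.
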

Before proving Theorem \ref{thm:main}, we must first establish a preliminary lemma.
\begin{lemma}\label{lemma:main}
    Let the sequences $(x_n)_{n\geq 0}$ and $(y_n)_{n\geq 0}$ be defined by
    \begin{equation*}
        x_{n} := \frac{a(a-b)(1-b)^n }{a(1-b)^{n+1} - b(1-a)^{n+1}} \quad \text{and} \quad y_{n} := \frac{b(a-b)(1-a)^n }{a(1-b)^{n+1} - b(1-a)^{n+1}}.
    \end{equation*}
    The terms $x_n$ and $y_n$ satisfy the coupled recurrence relations
    \begin{equation*}
        x_{n+1} = \frac{x_n(1-y_n)}{1-x_n y_n} \quad \text{and} \quad  y_{n+1} = \frac{y_n(1-x_n)}{1-x_n y_n}.
    \end{equation*}
\end{lemma}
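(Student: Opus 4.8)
The plan is to reduce the entire verification to a single quadratic identity in the auxiliary sequence $E_n := a(1-b)^n - b(1-a)^n$. The first observation is that the common denominator appearing in both $x_n$ and $y_n$ is precisely $E_{n+1}$, so that
\begin{equation}
x_n = \frac{a(a-b)(1-b)^n}{E_{n+1}}, \qquad y_n = \frac{b(a-b)(1-a)^n}{E_{n+1}}.
\end{equation}
With this shorthand the denominators telescope cleanly across the index shift, since the denominator of $x_{n+1}$ and $y_{n+1}$ is $E_{n+2}$.

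First I would compute $1-x_n$ and $1-y_n$. Writing each over the denominator $E_{n+1}$ and collecting terms, the numerator of $1-y_n$ factors as $a(1-b)^{n+1}-b(1-a)^{n+1}-b(a-b)(1-a)^n = (1-b)E_n$, and likewise the numerator of $1-x_n$ factors as $(1-a)E_n$. Thus
\begin{equation}
1 - x_n = \frac{(1-a)E_n}{E_{n+1}}, \qquad 1 - y_n = \frac{(1-b)E_n}{E_{n+1}}.
\end{equation}
These factorizations are routine, but they are what make the whole argument collapse.

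The crux is the denominator $1-x_ny_n$. Since $x_ny_n = ab(a-b)^2(1-a)^n(1-b)^n/E_{n+1}^2$, the content of the lemma comes down to showing that
\begin{equation}
E_{n+1}^2 - ab(a-b)^2(1-a)^n(1-b)^n = E_n E_{n+2},
\end{equation}
so that $1-x_ny_n = E_nE_{n+2}/E_{n+1}^2$. This is the main obstacle, though it is a purely algebraic one: setting $p := 1-b$ and $q := 1-a$, one has $E_n = ap^n - bq^n$, and expanding $E_{n+1}^2 - E_nE_{n+2}$ yields $ab\,p^nq^n(p-q)^2$; since $p-q = a-b$, this is exactly $ab(a-b)^2(1-a)^n(1-b)^n$. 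This is a Fibonacci/Catalan-type determinantal identity for the two-term linear recurrence satisfied by $E_n$.

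Finally I would assemble the pieces. Substituting the three computed quantities,
\begin{equation}
\frac{x_n(1-y_n)}{1-x_ny_n} = \frac{a(a-b)(1-b)^n}{E_{n+1}} \cdot \frac{(1-b)E_n}{E_{n+1}} \cdot \frac{E_{n+1}^2}{E_nE_{n+2}} = \frac{a(a-b)(1-b)^{n+1}}{E_{n+2}} = x_{n+1},
\end{equation}
and the analogous computation, using the symmetry that swapping $a \leftrightarrow b$ interchanges $x_n \leftrightarrow y_n$, yields $y_{n+1}$. I expect no serious difficulty beyond the quadratic identity above; the remainder is bookkeeping that the shorthand $E_n$ keeps under control.
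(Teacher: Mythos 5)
Your proof is correct and takes essentially the same approach as the paper's: your auxiliary sequence $E_n$ is just $(a-b)D_{n-1}(a,b)$ in the paper's notation, your factorizations $1-x_n = (1-a)E_n/E_{n+1}$ and $1-y_n = (1-b)E_n/E_{n+1}$ are the paper's two linear identities for $D_n$, and your quadratic identity $E_{n+1}^2 - E_nE_{n+2} = ab(a-b)^2(1-a)^n(1-b)^n$ is exactly the paper's Cassini-like identity rescaled by $(a-b)^2$. The assembly of these pieces into the recurrences is the same telescoping cancellation in both arguments.
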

\begin{proof}
    For convenience, let us first define the sequence $(D_{n}(a,b))_{n\geq 0}$ as
    \begin{equation*}
        D_{n}(a,b) := \frac{a(1-b)^{n+1} - b(1-a)^{n+1}}{a-b}.
    \end{equation*}
    Then the sequences $(x_{n})$ and $(y_{n})$ may be expressed in terms of $D_{n}(a,b)$ by writing
    \begin{equation*}
        x_{n} = \frac{a(1-b)^n}{D_n(a,b)} \quad \text{and} \quad  y_{n} = \frac{b(1-a)^n}{D_{n}(a,b)}.
    \end{equation*}
    Using these expressions we find that
    \begin{equation}
        \frac{x_n(1-y_n)}{1-x_n y_n} = \frac{a(1-b)^n(D_n(a,b)-b(1-a)^n)}{D^2_n(a,b) - ab(1-a)^n(1-b)^n}, \label{eq:lemma_proof1}
    \end{equation}
    and 
    \begin{equation}
        \frac{y_n(1-x_n)}{1-x_n y_n} = \frac{b(1-a)^n(D_n(a,b)-a(1-b)^n)}{D^2_n(a,b) - ab(1-a)^n(1-b)^n}. \label{eq:lemma_proof2}
    \end{equation}
    It may be shown by direct calculation that $D_{n}(a,b)$ possesses the Cassini-like identity
    \begin{equation*}
        D^2_n(a,b) -  D_{n-1}(a,b)D_{n+1}(a,b) = ab(1-a)^n(1-b)^n
    \end{equation*}
    as well as the identities 
    \begin{equation*}
        D_n(a,b)- (1-b)D_{n-1}(a,b)= b(1-a)^n,
    \end{equation*}
    and 
    \begin{equation*}
        D_n(a,b)-(1-a)D_{n-1}(a,b) =  a(1-b)^n.
    \end{equation*}
    Therefore, Equations \eqref{eq:lemma_proof1} and \eqref{eq:lemma_proof2} may be written as
    \begin{equation*}
        \frac{x_n(1-y_n)}{1-x_n y_n} = \frac{a(1-b)^{n+1}D_{n-1}(a,b)}{D_{n-1}(a,b)D_{n+1}(a,b)} = \frac{a(1-b)^{n+1}}{D_{n+1}(a,b)} = x_{n+1}
    \end{equation*}
    and
    \begin{equation*}
        \frac{y_n(1-x_n)}{1-x_n y_n} = \frac{b(1-a)^{n+1}D_{n-1}(a,b)}{D_{n-1}(a,b)D_{n+1}(a,b)} = \frac{b(1-a)^{n+1}}{D_{n+1}(a,b)} = y_{n+1},
    \end{equation*}
    which is exactly what we hoped to show.  This completes the proof of Lemma \ref{lemma:main}.
\end{proof}
With Lemma \ref{lemma:main} established, we may now move on to the proof of Theorem~\ref{thm:main}.
\begin{proof}[Proof of Theorem~\ref{thm:main}]
    As we have in Lemma \ref{lemma:main}, define the sequences $(x_n)_{n\geq 0}$ and $(y_n)_{n\geq 0}$ by
    \begin{equation*}
        x_{n} := \frac{a(a-b)(1-b)^n }{a(1-b)^{n+1} - b(1-a)^{n+1}} \quad \text{and} \quad y_{n} := \frac{b(a-b)(1-a)^n }{a(1-b)^{n+1} - b(1-a)^{n+1}}.
    \end{equation*}
    Lemma \ref{lemma:main} showed that
    \begin{equation*}
        x_{n+1} = \frac{x_n(1-y_n)}{1-x_n y_n} \quad \text{and} \quad  y_{n+1} = \frac{y_n(1-x_n)}{1-x_n y_n}.
    \end{equation*}
    From these expressions, it is not difficult to verify by induction that $0 < x_n ,y_n < 1$ when $a,b \in (0,1)$.  Thus, with Lemma \ref{lemma:main} in mind, by setting $x = x_n$ and $y= y_n$ in Abel's functional Equation~\eqref{eq:abel}, we have
    \begin{align*}
    \mathcal{L}(x_n y_n) &= \mathcal{L}(x_n) + \mathcal{L}(y_n) - \mathcal{L}\left(\frac{x_n(1-y_n)}{1-x_n y_n}\right) - \mathcal{L}\left(\frac{y_n(1-x_n)}{1-x_n y_n}\right) \\
    &= \mathcal{L}(x_n) + \mathcal{L}(y_n) - \mathcal{L}(x_{n+1}) - \mathcal{L}(y_{n+1}).
\end{align*}
Therefore, in summing each side we find that
\begin{equation}
   \sum_{n=0}^{\infty}\mathcal{L}(x_n y_n) = \mathcal{L}(x_0) + \mathcal{L}(y_0) - \lim_{n\to \infty}\mathcal{L}(x_n) - \lim_{n\to \infty}\mathcal{L}(y_n). \label{eq:main_telescopic_series}
\end{equation}
In order to evaluate the limit of the sequences $(x_n)$ and $(y_n)$, note that we may write
\begin{equation*}
    x_{n} = \frac{a(a-b)}{a(1-b) - b(1-a)\left(\frac{1-a}{1-b}\right)^n} \quad \text{and} \quad y_{n} = \frac{b(a-b)}{a(1-b)\left(\frac{1-b}{1-a}\right)^n - b(1-a)}.
\end{equation*}
Thus, the limit of $x_n$ and $y_n$ is determined by $\lim_{n \to \infty} \left(\frac{1-a}{1-b}\right)^{\pm n}$.  Since $ \left(\frac{1-a}{1-b}\right)^{n}$ vanishes in the limit when $1-a < 1 - b$, we arrive at the following evaluations:
\begin{equation*}
    \lim_{n\to\infty}x_n = \begin{cases}
        \frac{a-b}{1-b} & \text{if } a > b \\
        0 & \text{if } a < b
        \end{cases} \quad \text{and} \quad \lim_{n\to\infty}y_n = \begin{cases}
        0 & \text{if } a > b \\
        \frac{b-a}{1-a} & \text{if } a < b.
        \end{cases}
\end{equation*}
Consequently, Equation~\eqref{eq:main_telescopic_series} may be written as
    \begin{equation*}
     \sum_{n=0}^{\infty}\mathcal{L}\left(\frac{ab(a-b)^2(1-a)^n(1-b)^n}{(a(1-b)^{n+1}-b(1-a)^{n+1})^2}\right) = \mathcal{L}(a) + \mathcal{L}(b) - \begin{cases}
         \mathcal{L}\left(\frac{a-b}{1-b}\right) & \text{if } a > b \\
         \mathcal{L}\left(\frac{b-a}{1-a}\right) & \text{if } a < b.
     \end{cases}
     \end{equation*}
     Since this is equivalent to the result stated in Theorem \ref{thm:main}, our proof is complete.
\end{proof}
Before proceeding, let us outline the structure of the remainder of the paper.  In the following section, we establish our generalization of Theorem~\ref{thm:bridgeman} and provide a number of special cases.  In addition, we show that in our case, the two identities which comprise our generalization are more closely related than they might first appear.  Finally, in the last section we explain exactly how Bridgeman's Theorem~\ref{thm:bridgeman} relates to our generalization of it, which is not immediately clear since one is expressed in terms of solutions to Pell's equation while the other is expressed in terms of Lucas sequences.
\section{Series Involving Lucas Sequences}
The Lucas sequences $(U_n(P,Q))_{n\geq 0}$ and $(V_n(P,Q))_{n\geq 0}$ are second order linear recurrence sequences with constant coefficients.  They generalize the Fibonacci and Lucas numbers and possess many other well-known sequences as special cases.

The Lucas sequence $(U_n(P,Q))_{n\geq 0}$ is defined by the recurrence
\begin{align*}
    U_0(P,Q) &= 0, \\
    U_1(P,Q) &= 1, \\
    U_{n}(P,Q) &= P\cdot U_{n-1}(P,Q) - Q\cdot U_{n-2}(P,Q),
\end{align*}
while the sequence $(V_n(P,Q))_{n\geq 0}$ satisfies the same recurrence albeit with different initial values:
\begin{align*}
    V_0(P,Q) &= 2, \\
    V_1(P,Q) &= P, \\
    V_{n}(P,Q) &= P\cdot V_{n-1}(P,Q) - Q\cdot V_{n-2}(P,Q).
\end{align*}
The characteristic equation for the Lucas sequences is given by
\begin{equation*}
    x^2-Px+Q = 0,
\end{equation*}
with a discriminant of $D := P^2-4Q$ and roots
\begin{equation*}
    \alpha = \frac{P+\sqrt{D}}{2} \quad \text{and} \quad \beta = \frac{P- \sqrt{D}}{2}.
\end{equation*}
In addition, if $D \neq 0$ so that $\alpha$ and $\beta$ are distinct, one may show that
\begin{equation*}
    \alpha^n = \frac{V_n+U_n\sqrt{D}}{2} \quad \text{and} \quad \beta^n = \frac{V_n - U_n\sqrt{D}}{2}.
\end{equation*}
Therefore, $U_n(P,Q)$ and $V_n(P,Q)$ possess the Binet forms
\begin{equation*}
    U_n(P,Q) = \frac{\alpha^n - \beta^n}{\alpha - \beta} = \frac{\alpha^n - \beta^n}{\sqrt{D}} \quad \text{and} \quad V_n(P,Q) = \alpha^n + \beta^n.
\end{equation*}
In the remainder of this paper we will assume that $P$ and $Q$ are real numbers such that $P > 0, Q \neq 0$, and $D > 0$.

The starting point for the results in this section is the following corollary of Theorem~\ref{thm:main}.
\begin{corollary}\label{cor:main}
    If $a \in (0,1)$, then 
    \begin{equation*}
        \sum_{n=1}^{\infty}\mathcal{L}\left(\frac{a^2\left(\frac{1}{2}-\frac{a}{2}\right)^{n}\left(\frac{1}{2} + \frac{a}{2}\right)^{n}}{\left(\left(\frac{1}{2}+\frac{a}{2}\right)^{n+1}-\left(\frac{1}{2} - \frac{a}{2}\right)^{n+1}\right)^2}\right) = \mathcal{L}\left(\frac{1-a}{1+a}\right).
    \end{equation*}  
\end{corollary}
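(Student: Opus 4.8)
The plan is to obtain Corollary \ref{cor:main} as a direct specialization of Theorem \ref{thm:main}, choosing the two free parameters to be reflection partners so that the three-term right-hand side collapses to a single dilogarithm. Specifically, I would apply the theorem with its parameters set to $c := \frac{1+a}{2}$ and $d := \frac{1-a}{2}$. For $a \in (0,1)$ these satisfy $c \in (\tfrac12,1)$ and $d \in (0,\tfrac12)$, so both lie in $(0,1)$, they are distinct (since $a \neq 0$), and crucially $d = 1-c$. This last relation is the whole reason for the choice: having $c$ and $d$ be reflection partners is exactly what forces the simplification on the right.

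For the right-hand side, I would first invoke the reflection identity \eqref{eq:two_term} on the pair $\mathcal{L}(c)+\mathcal{L}(d)=\mathcal{L}(c)+\mathcal{L}(1-c)=\frac{\pi^2}{6}$. I would then evaluate the subtracted term: since $c>d$ we have $\min(c,d)=d$ and $|c-d|=a$, so $\frac{|c-d|}{1-\min(c,d)}=\frac{a}{1-d}=\frac{a}{c}=\frac{2a}{1+a}$. The right-hand side therefore reads $\frac{\pi^2}{6}-\mathcal{L}\!\left(\frac{2a}{1+a}\right)$, and a second application of the reflection identity turns this into $\mathcal{L}\!\left(1-\frac{2a}{1+a}\right)=\mathcal{L}\!\left(\frac{1-a}{1+a}\right)$, the claimed value.

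For the left-hand side, I would substitute $c,d$ into the general summand of Theorem \ref{thm:main} and simplify using $1-c=d$ and $1-d=c$. The numerator factor $cd(c-d)^2(1-c)^n(1-d)^n$ becomes $cd(c-d)^2(cd)^n=(c-d)^2(cd)^{n+1}$, while the denominator $\bigl(c(1-d)^{n+1}-d(1-c)^{n+1}\bigr)^2$ becomes $\bigl(c^{n+2}-d^{n+2}\bigr)^2$. Thus the $n$-th summand equals $\frac{(c-d)^2(cd)^{n+1}}{(c^{n+2}-d^{n+2})^2}$, which is exactly the corollary's summand evaluated at index $n+1$ (after writing $c-d=a$, $cd=\frac{1-a^2}{4}$, and recognizing $\tfrac12\pm\tfrac a2$ as $c$ and $d$). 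Re-indexing by $m=n+1$ then converts $\sum_{n=0}^{\infty}$ into $\sum_{m=1}^{\infty}$, which accounts for the sum starting at $1$ in the statement.

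The computation is routine once the substitution is fixed; the only step demanding genuine care is the subtracted term, where one must correctly resolve $\min(c,d)$ and $|c-d|$ — this is where the hypothesis $a>0$, which forces $c>d$, gets used — and then notice that $\frac{\pi^2}{6}-\mathcal{L}(\cdot)$ is again a single dilogarithm via reflection. The index shift, which explains why the corollary's sum begins at $n=1$ rather than $n=0$, is the other detail worth writing out explicitly.
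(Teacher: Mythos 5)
Your proposal is correct and follows essentially the same route as the paper: substituting $\left(\tfrac{1+a}{2},\tfrac{1-a}{2}\right)$ into Theorem \ref{thm:main}, collapsing $\mathcal{L}(c)+\mathcal{L}(1-c)$ via the reflection identity \eqref{eq:two_term}, applying reflection once more to $\frac{\pi^2}{6}-\mathcal{L}\left(\frac{2a}{1+a}\right)$, and re-indexing the sum. Your write-up is in fact slightly cleaner than the paper's, since you introduce fresh symbols $c,d$ rather than reusing $a$ for both the corollary's parameter and the theorem's, and you spell out the $\min$/absolute-value resolution explicitly.
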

\begin{proof}
    If we let $a = \frac{1}{2}+\frac{a}{2}$ and $b = \frac{1}{2}-\frac{a}{2}$ in Theorem~\ref{thm:main} we find that
\begin{equation*}
    \sum_{n=0}^{\infty}\mathcal{L}\left(\frac{a^2(\frac{1}{2}-\frac{a}{2})^{n+1}(\frac{1}{2}+\frac{a}{2})^{n+1}}{((\frac{1}{2}+\frac{a}{2})^{n+2}-(\frac{1}{2}-\frac{a}{2})^{n+2})^2}\right) = \mathcal{L}\left(\frac{1}{2}+\frac{a}{2}\right) + \mathcal{L}\left(\frac{1}{2}-\frac{a}{2}\right) - \mathcal{L}\left(\frac{2a}{1+a}\right).
\end{equation*}
By re-indexing and using the two-term functional equation, Equation~\eqref{eq:two_term}, to simplify the right side, we obtain
\begin{equation*}
    \sum_{n=1}^{\infty}\mathcal{L}\left(\frac{a^2(\frac{1}{2}-\frac{a}{2})^{n}(\frac{1}{2}+\frac{a}{2})^{n}}{((\frac{1}{2}+\frac{a}{2})^{n+1}-(\frac{1}{2}-\frac{a}{2})^{n+1})^2}\right) = \frac{\pi^2}{6} - \mathcal{L}\left(\frac{2a}{1+a}\right).
\end{equation*}
Then, applying Equation~\eqref{eq:two_term} once more to the right side completes the proof.
\end{proof}
\begin{remark}
    Although the form given above is better suited for our purposes below, the series in Corollary 1 can be expressed more simply as
\begin{equation*}
    \sum_{n=1}^{\infty}\mathcal{L}\left(\frac{a^2\left(1-a^2\right)^{n}}{4\left(\left(1-a\right)^{n+1}-\left(1+a\right)^{n+1}\right)^2}\right) = \mathcal{L}\left(\frac{1-a}{1+a}\right).
\end{equation*}
\end{remark}
With Corollary \ref{cor:main} established, we are now able to present our generalization of Bridgeman's theorem.  
\begin{theorem}\label{thm:generalized_bridgeman}
    If $Q > 0$ and $k \in \mathbb{Z}_{>0}$, then
    \begin{equation}
        \sum_{n=1}^{\infty}\mathcal{L}\left(\frac{U^2_k Q^{kn}}{U^2_{k(n+1)}}\right) = \mathcal{L}\left(\frac{Q^k}{\alpha^{2k}}\right).\label{eq:lucas_series_pos}
    \end{equation}
    If $Q < 0$ and $k \in \mathbb{Z}_{>0}$ is an odd integer, then 
    \begin{equation}
       \sum_{n=1}^{\infty}\mathcal{L}\left(-\frac{V^2_k Q^{k(2n-1)}}{D U^2_{2kn} }\right) + \sum_{n=1}^{\infty}\mathcal{L}\left(\frac{V^2_k Q^{2kn}}{ V^2_{k(2n+1)}}\right) = \mathcal{L}\left(-\frac{Q^k}{\alpha^{2k}}\right).\label{eq:lucas_series_neg}
    \end{equation} 
\end{theorem}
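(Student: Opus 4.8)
The plan is to obtain both identities as specializations of Corollary \ref{cor:main}. It is convenient to first recast that corollary in a symmetric form: writing $p=\frac12+\frac a2$ and $q=\frac12-\frac a2$, so that $p+q=1$ and $a=p-q$, the corollary reads
\begin{equation}
    \sum_{n=1}^{\infty}\mathcal{L}\left(\frac{(p-q)^2(pq)^n}{(p^{n+1}-q^{n+1})^2}\right)=\mathcal{L}\left(\frac{q}{p}\right),\qquad p+q=1,\ 0<q<p<1.
\end{equation}
Everything then reduces to choosing $p$ and $q$ built from $\alpha^k$ and $\beta^k$ so that the telescoped argument and the right-hand side collapse into Lucas data through the Binet forms $U_n=(\alpha^n-\beta^n)/\sqrt D$ and $V_n=\alpha^n+\beta^n$.

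For the case $Q>0$ I would set $p=\alpha^k/V_k$ and $q=\beta^k/V_k$. Since $Q>0$ and $D>0$ force $\alpha>\beta>0$, one checks at once that $p+q=1$ and $0<q<p<1$, so the symmetric corollary applies. Substituting the Binet forms gives $p-q=\sqrt D\,U_k/V_k$, $pq=Q^k/V_k^2$, and $p^{n+1}-q^{n+1}=\sqrt D\,U_{k(n+1)}/V_k^{n+1}$; after the powers of $D$, $V_k$, and $\sqrt D$ cancel, the general term becomes exactly $U_k^2 Q^{kn}/U_{k(n+1)}^2$, while the right-hand side is $\mathcal{L}(q/p)=\mathcal{L}(\beta^k/\alpha^k)=\mathcal{L}(Q^k/\alpha^{2k})$. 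This is \eqref{eq:lucas_series_pos}.

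The case $Q<0$ with $k$ odd uses the same device but must absorb the sign of $\beta^k<0$. Here I would take $p=\alpha^k/(\sqrt D\,U_k)$ and $q=-\beta^k/(\sqrt D\,U_k)$; because $\alpha^k-\beta^k=\sqrt D\,U_k$ we again get $p+q=1$, and since $0<-\beta^k=|\beta|^k<\alpha^k$ we get $0<q<p<1$, so the corollary still applies. Now $p-q=V_k/(\sqrt D\,U_k)$ and $pq=-Q^k/(D\,U_k^2)>0$, but the decisive feature is that $q^{n+1}=(-1)^{n+1}\beta^{k(n+1)}/(\sqrt D\,U_k)^{n+1}$ carries a parity-dependent sign, so $p^{n+1}-q^{n+1}$ equals $\sqrt D\,U_{k(n+1)}/(\sqrt D\,U_k)^{n+1}$ when $n$ is odd and $V_{k(n+1)}/(\sqrt D\,U_k)^{n+1}$ when $n$ is even. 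The general term then simplifies to $(-1)^n V_k^2 Q^{kn}$ divided by the square of this numerator.

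Finally I would split the single sum according to the parity of $n$. All arguments lie in $[0,1)$ and $\mathcal{L}\ge 0$ there, while the full sum converges to the finite right-hand side, so the even- and odd-indexed subseries each converge and may be separated freely. Writing $n=2m-1$ for the odd terms yields $-V_k^2 Q^{k(2m-1)}/(D\,U_{2km}^2)$ and writing $n=2m$ for the even terms yields $V_k^2 Q^{2km}/V_{k(2m+1)}^2$; reindexing $m\mapsto n$ reproduces exactly the two sums of \eqref{eq:lucas_series_neg}, with the common right-hand side $\mathcal{L}(q/p)=\mathcal{L}(-\beta^k/\alpha^k)=\mathcal{L}(-Q^k/\alpha^{2k})$. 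The only genuinely delicate points are verifying the domain inequalities $0<q<p<1$ under the sign constraints on $Q$ and $k$, and correctly tracking the alternating sign in $q^{n+1}$ that toggles the numerator between the $U$- and $V$-Binet forms; the remainder is routine cancellation.
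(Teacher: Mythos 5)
Your proposal is correct and follows essentially the same route as the paper: your symmetric substitutions $p=\alpha^k/V_k,\ q=\beta^k/V_k$ (resp. $p=\alpha^k/(\sqrt{D}\,U_k),\ q=-\beta^k/(\sqrt{D}\,U_k)$) are exactly the paper's choices $a=\sqrt{D}\,U_k/V_k$ (resp. $a=V_k/(\sqrt{D}\,U_k)$) in Corollary \ref{cor:main}, since $a=p-q$, and the subsequent Binet-form cancellations and the parity split of the alternating series match the paper's argument step for step. The only difference is cosmetic: recasting the corollary with $p+q=1$ makes the bookkeeping cleaner but does not change the proof.
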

\begin{proof}
To prove Equation~\eqref{eq:lucas_series_pos}, notice that since $Q > 0$ and $D = P^2 - 4Q > 0$, $\beta = (P-\sqrt{D})/2$ is also positive. 
 Therefore, the quotient
\begin{equation*}
    \frac{\sqrt{D} U_k}{V_k} = \frac{\alpha^k - \beta^k}{\alpha^k + \beta^k}
\end{equation*}
    is less than 1 for all $k \in \mathbb{Z}_{> 0}$.  Thus, we are permitted to set $a = U_k\sqrt{D}/V_k$ in Corollary \ref{cor:main}.  In doing so, we find that
    \begin{equation}
        \sum_{n=1}^{\infty}\mathcal{L}\left(\frac{U^2_k D\left(\frac{1}{2}-\frac{U_k\sqrt{D}}{2V_k}\right)^{n}\left(\frac{1}{2} + \frac{U_k\sqrt{D}}{2 V_k}\right)^{n}}{V^2_k\left(\left(\frac{1}{2}+\frac{U_k\sqrt{D}}{2 V_k}\right)^{n+1}-\left(\frac{1}{2} - \frac{U_k \sqrt{D}}{2 V_k}\right)^{n+1}\right)^2}\right) = \mathcal{L}\left(\frac{V_k-U_k \sqrt{D}}{V_k+U_k\sqrt{D}}\right).\label{eq:lucas_proof_1}
    \end{equation}
    Examining the left side of Equation \eqref{eq:lucas_proof_1} first, we see that the numerator of the dilogarithm's argument may be expressed as
    \begin{align*}
        U^2_k D\left(\tfrac{1}{2}-\tfrac{U_k\sqrt{D}}{2V_k}\right)^{n}\left(\tfrac{1}{2} + \tfrac{U_k\sqrt{D}}{2 V_k}\right)^{n} &= U^2_k D V^{-2n}_k\left(\tfrac{V_k-U_k\sqrt{D}}{2}\right)^{n}\left(\tfrac{V_k+U_k\sqrt{D}}{2}\right)^{n} \\
        &= U^2_k D V^{-2n}_k \beta^{kn} \alpha^{kn} \\
        &= U^2_k D V^{-2n}_k Q^{kn},
    \end{align*}
    while the denominator may be written as
    \begin{align*}
        \MoveEqLeft[12]  V^2_k\left(\left(\tfrac{1}{2}+\tfrac{U_k\sqrt{D}}{2 V_k}\right)^{n+1} -\left(\tfrac{1}{2} - \tfrac{U_k \sqrt{D}}{2  V_k}\right)^{n+1}\right)^2 \\ 
        ={}& V^{-2n}_k\left(\left(\tfrac{V_k + U_k\sqrt{D}}{2}\right)^{n+1}-\left(\tfrac{V_k - U_k \sqrt{D}}{2}\right)^{n+1}\right)^2 \\
        ={}& V^{-2n}_k \left(\alpha^{k(n+1)} - \beta^{k(n+1)}\right)^2.
    \end{align*}
    Therefore, the sum simplifies to
    \begin{equation*}
        \sum_{n=1}^{\infty}\mathcal{L}\left(\frac{U^2_k D Q^{kn}}{(\alpha^{k(n+1)} - \beta^{k(n+1)})^2}\right) =  \sum_{n=1}^{\infty}\mathcal{L}\left(\frac{U^2_k Q^{kn}}{U^2_{k(n+1)}}\right).
    \end{equation*}
    
    Turning our attention to the right side of Equation~\eqref{eq:lucas_proof_1}, we see that 
    \begin{align*}
        \mathcal{L}\left(\frac{V_k-U_k \sqrt{D}}{V_k+U_k\sqrt{D}}\right) &= \mathcal{L}\left(\frac{V^2_k-U^2_k D}{(V_k+U_k\sqrt{D})^2}\right) \\
        &= \mathcal{L}\left(\frac{4Q^k}{(V_k+U_k\sqrt{D})^2}\right) \\
        &= \mathcal{L}\left(\frac{Q^k}{\alpha^{2k}}\right).
    \end{align*}
    Thus, we have shown that
    \begin{equation*}
        \sum_{n=1}^{\infty}\mathcal{L}\left(\frac{U^2_k Q^{kn}}{U^2_{k(n+1)}}\right)  = \mathcal{L}\left(\frac{Q^k}{\alpha^{2k}}\right),
    \end{equation*}
   proving Equation~\eqref{eq:lucas_series_pos}.
   
   Moving on to Equation~\eqref{eq:lucas_series_neg}, since $Q < 0$ and $k$ is an odd integer, we see that $\beta^k < 0$.  Therefore, the quotient
\begin{equation*}
    \frac{V_k}{\sqrt{D} U_k} = \frac{\alpha^k + \beta^k}{\alpha^k - \beta^k}
\end{equation*}
is less than 1.  Setting $a = V_k/U_k \sqrt{D}$ in Corollary \ref{cor:main}, we find that
    \begin{equation*}
        \sum_{n=1}^{\infty}\mathcal{L}\left(\frac{V^2_k\left(\frac{1}{2}-\frac{V_k}{2U_k\sqrt{D}}\right)^{n}\left(\frac{1}{2} + \frac{V_k}{2U_k\sqrt{D}}\right)^{n}}{U^2_k D\left(\left(\frac{1}{2}+\frac{V_k}{2U_k\sqrt{D}}\right)^{n+1}-\left(\frac{1}{2} - \frac{V_k}{2U_k\sqrt{D}}\right)^{n+1}\right)^2}\right) = \mathcal{L}\left(\frac{U_k\sqrt{D}-V_k}{U_k\sqrt{D}+V_k}\right).
    \end{equation*}
    Once again, simplifying the numerator and denominator separately, the numerator is seen to be
    \begin{equation*}
        V^2_k\left(\tfrac{1}{2}-\tfrac{V_k}{2U_k\sqrt{D}}\right)^{n}\left(\tfrac{1}{2} + \tfrac{V_k}{2U_k\sqrt{D}}\right)^{n} = (-1)^n U^{-2n}_k D^{-n} V^2_k Q^{kn},
    \end{equation*}
    while the denominator admits the form
    \begin{multline*}
     U^2_k D\left(\left(\tfrac{1}{2}+\tfrac{V_k}{2U_k\sqrt{D}}\right)^{n+1}-\left(\tfrac{1}{2} - \tfrac{V_k}{2U_k\sqrt{D}}\right)^{n+1}\right)^2 \quad\quad \quad\quad \quad\quad \quad\quad {}\\
     \begin{aligned}
        &= U^{-2n}D^{-n}\left(\left(\tfrac{V_k+U_k\sqrt{D}}{2}\right)^{n+1}-(-1)^{n+1}\left(\tfrac{V_k-U_k\sqrt{D}}{2} \right)^{n+1}\right)^2 \\
        &= U^{-2n} D^{-n} (\alpha^{k(n+1)} + (-1)^{n} \beta^{k(n+1)})^2.
     \end{aligned}
    \end{multline*}
    Putting this all together yields 
    \begin{equation*}
        \sum_{n=1}^{\infty}\mathcal{L}\left(\frac{V^2_k (-1)^n Q^{kn}}{\left(\alpha^{k(n+1)}+(-1)^{n}\beta^{k(n+1)}\right)^2 }\right) = \mathcal{L}\left(\frac{U_k\sqrt{D}-V_k}{U_k\sqrt{D}+V_k}\right).
    \end{equation*}
    
    Next, by splitting the series into two sums, one over all even indices and the other over all odd indices, we find that
    \begin{multline*}
        \sum_{n=1}^{\infty}\mathcal{L}\left(\frac{V^2_k (-1)^n Q^{kn}}{\left(\alpha^{k(n+1)}+(-1)^{n}\beta^{k(n+1)}\right)^2 }\right) \\ = \sum_{n=1}^{\infty}\mathcal{L}\left(-\frac{V^2_k Q^{k(2n-1)}}{\left(\alpha^{2kn}-\beta^{2kn}\right)^2 }\right) + \sum_{n=1}^{\infty}\mathcal{L}\left(\frac{V^2_k Q^{2kn}}{\left(\alpha^{k(2n+1)}+\beta^{k(2n+1)}\right)^2 }\right).
    \end{multline*}
    Using the Binet formulas for $U_n$ and $V_n$, we arrive at
        \begin{multline*}
        \sum_{n=1}^{\infty}\mathcal{L}\left(\frac{V^2_k (-1)^n Q^{kn}}{\left(\alpha^{k(n+1)}+(-1)^{n}\beta^{k(n+1)}\right)^2 }\right) \\ = \sum_{n=1}^{\infty}\mathcal{L}\left(-\frac{V^2_k Q^{k(2n-1)}}{D U^2_{2kn} }\right) + \sum_{n=1}^{\infty}\mathcal{L}\left(\frac{V^2_k Q^{2kn}}{ V^2_{k(2n+1)}}\right).
    \end{multline*}
    Finally, the right-hand side may be simplified in the same way we proceeded in the proof of Equation~\eqref{eq:lucas_series_pos} to show that
    \begin{equation*}
        \mathcal{L}\left(\frac{U_k\sqrt{D}-V_k}{U_k\sqrt{D}+V_k}\right) = \mathcal{L}\left(-\frac{Q^k}{\alpha^{2k}}\right).
    \end{equation*}
    This completes the proof of Theorem~\ref{thm:generalized_bridgeman}.
\end{proof}
\begin{remark}
    When the parameters $P$ and $Q$ are integers, the Lucas sequence $U_n(P,Q)$ is a \textit{divisibility sequence}, in the sense that if $m$ divides $n$ then $U_m$ divides $U_n$.  Moreover, if $\gcd(P,Q) = 1$, then $U_n(P,Q)$ is a \textit{strong divisibility sequence}, which means that
    \begin{equation*}
        \gcd(U_m,U_n) = U_{\gcd(m,n)}.
    \end{equation*}
    Consequently, when $P,Q \in \mathbb{Z}$, the Lucas sequence $U^2_k$ fully divides $U^2_{k(n+1)}$ in the summand of Equation~\eqref{eq:lucas_series_pos}.
\end{remark}

We now present some specific examples of Theorem~\ref{thm:generalized_bridgeman}.  Many of our examples have already been exhibited in the previous works of Hoorfar and Qi \cite{hoorfar2009sums}, Bridgeman \cite{bridgeman2021dilogarithm}, and Jaipong et al. \cite{jaipong2023}.  Nevertheless, we restate them here as well as present some new identities, including the initial equations of Examples \ref{ex:chebyshev} and \ref{ex:repunits}, which generalize existing results. 
\begin{example}\label{ex:lucas_pos_fib}
If $P=3$ and $Q=1$, then $U_n(3,1) = F_{2n}$, where $F_{n}$ is the $n$-th Fibonacci number.  With these values of $P$ and $Q$, Equation~\eqref{eq:lucas_series_pos} yields
\begin{equation*}
    \sum_{n=2}^{\infty}\mathcal{L}\left(\frac{F^2_{2k}}{F^2_{2kn}}\right)  = \mathcal{L}\left(\frac{1}{\varphi^{4k}}\right),
\end{equation*}
which was given as an example in Jaipong et al. \cite{jaipong2023}.  In particular, if $k = 1$, then we have the nice identity
\begin{equation*}
    \sum_{n=2}^{\infty}\mathcal{L}\left(\frac{1}{F^2_{2n}}\right)  = \mathcal{L}\left(\frac{2}{7+3\sqrt{5}}\right).
\end{equation*}
\end{example}
\begin{example}\label{ex:chebyshev}
If $P=2x$ and $Q=1$, then $U_n(2x,1) = U_{n-1}(x)$, where $U_{n}(x)$ is the $n$-th Chebyshev polynomial of the second kind.  Equation~\eqref{eq:lucas_series_pos} in this case returns
\begin{equation*}
    \sum_{n=2}^{\infty}\mathcal{L}\left(\frac{U^2_{k-1}(x)}{U^2_{kn-1}(x)}\right)  = \mathcal{L}\left(\frac{1}{(x+\sqrt{x^2-1})^{2k}}\right).
\end{equation*}
Letting $x = \cosh(\theta)$ and $k=1$, the identity $U_{n-1}(\cosh(\theta)) = \sinh(n \theta) / \sinh(\theta)$ allows us to deduce the interesting series 
\begin{equation*}
    \sum_{n=2}^{\infty}\mathcal{L}\left(\frac{\sinh^2(\theta)}{\sinh^2(n \theta)}\right)  = \mathcal{L}\left(e^{-2\theta}\right).
\end{equation*}
This identity was first derived by Hoorfar and Qi \cite{hoorfar2009sums} and later rediscovered by Bridgeman \cite{bridgeman2021dilogarithm}.
\end{example}
\begin{example}\label{ex:repunits}
If $P=x+1$ and $Q=x$ with $x > 1$, then 
\begin{equation*}
    U_n(x+1,x) = \frac{x^n-1}{x-1} =1+x+x^2+\cdots + x^{n-1}.
\end{equation*}
Applying Equation \eqref{eq:lucas_series_pos}, we obtain
\begin{equation*}
    \sum_{n=1}^{\infty}\mathcal{L}\left(x^{kn}\left(\frac{x^k-1}{x^{k(n+1)}-1}\right)^2\right)  = \mathcal{L}\left(\frac{1}{x^k}\right).
\end{equation*}
Setting $k=1$ gives the special case
\begin{equation*}
    \sum_{n=1}^{\infty}\mathcal{L}\left(\left(\frac{x^{n/2}}{1+x+x^2+\cdots + x^{n}}\right)^2\right)  = \mathcal{L}\left(\frac{1}{x}\right),
\end{equation*}
which was given as an example by Jaipong et al. \cite{jaipong2023}.
\end{example}
\begin{example}\label{ex:lucas_neg_fib}
If $P=1, Q=-1$, and $k$ is an odd integer, then $U_n(1,-1) = F_n$ and $V_n(1,-1) = L_n$, where $L_{n}$ is the $n$-th Lucas number.  Equation~\eqref{eq:lucas_series_neg} shows that
\begin{equation*}
    \sum_{n=1}^{\infty}\mathcal{L}\left(\frac{L^2_k}{5 F^2_{2kn} }\right) + \sum_{n=1}^{\infty}\mathcal{L}\left(\frac{L^2_k}{L^2_{k(2n+1)} }\right) = \mathcal{L}\left(\frac{1}{\varphi^{2k}}\right).
\end{equation*}
Specifically, for $k = 1$ we have
\begin{equation*}
    \sum_{n=1}^{\infty}\mathcal{L}\left(\frac{1}{5 F^2_{2n} }\right) + \sum_{n=1}^{\infty}\mathcal{L}\left(\frac{1}{L^2_{2n+1} }\right) = \frac{\pi^2}{15}.
\end{equation*}
This example was first derived by Bridgeman \cite{bridgeman2021dilogarithm}. 
\end{example} 
\begin{example}
If $P=2, Q=-1$, and $k$ is an odd integer, then $U_n(2,-1) = P_n$ and $V_n(2,-1) = Q_n$, where $P_{n}$ is the $n$-th Pell number and $Q_{n}$ is the $n$-th Pell-Lucas number. With these values, Equation~\eqref{eq:lucas_series_neg} yields the identity
\begin{equation*}
    \sum_{n=1}^{\infty}\mathcal{L}\left(\frac{Q^2_k}{8 P^2_{2kn} }\right) + \sum_{n=1}^{\infty}\mathcal{L}\left(\frac{Q^2_k}{Q^2_{k(2n+1)} }\right) = \mathcal{L}\left(\frac{1}{(1+\sqrt{2})^{2k}}\right).
\end{equation*}
For $k = 1$, we have
\begin{equation*}
    \sum_{n=1}^{\infty}\mathcal{L}\left(\frac{1}{2 P^2_{2n} }\right) + \sum_{n=1}^{\infty}\mathcal{L}\left(\frac{4}{Q^2_{2n+1} }\right) = \mathcal{L}\left(\frac{1}{3+2\sqrt{2}}\right).
\end{equation*}
\end{example}
\begin{example}\label{ex:1,-3}
    Let $P=1$ and $Q = -3$ and set $A_n = U(1,-3)$ and ${B_n = V_n(1,-3)}$.  The first handful of terms of $(A_n)$ and $ (B_n)$ are given by
    \begin{equation}
        (A_n)_{n\geq 0} = (0,1,1,4,7,19,40,97,217,508,1159,\cdots)
    \end{equation}
    and 
    \begin{equation}
        (B_n)_{n\geq 0} = (2,1,7,10,31,61,154,337,799,1810,4207,\dots).
    \end{equation} 
    For odd $k$, Equation~\eqref{eq:lucas_series_neg} gives
    \begin{equation*}
        \sum_{n=1}^{\infty}\mathcal{L}\left(\frac{B^2_k 3^{k(2n-1)} }{13 A^2_{2kn} }\right) + \sum_{n=1}^{\infty}\mathcal{L}\left(\frac{B^2_k 3^{2kn}}{ B^2_{k(2n+1)} }\right) = \mathcal{L}\left(\left(\frac{6}{7+\sqrt{13}}\right)^k\right).
    \end{equation*}
    Letting $k=1$, we find that
    \begin{equation*}
        \sum_{n=1}^{\infty}\mathcal{L}\left(\frac{3^{2n-1} }{13 A^2_{2n} }\right) + \sum_{n=1}^{\infty}\mathcal{L}\left(\frac{3^{2n}}{ B^2_{2n+1} }\right) = \mathcal{L}\left(\frac{6}{7+\sqrt{13}}\right).
    \end{equation*}
\end{example}

Before moving on, we point out that since we have not required $P$ and $Q$ to take integer values, it can be shown that Equation~\eqref{eq:lucas_series_neg} of Theorem~\ref{thm:generalized_bridgeman} is actually a special case of Equation~\eqref{eq:lucas_series_pos} of the same theorem.  To see this, suppose that $Q < 0$ and set
    \begin{equation*}
        P': = \sqrt{P^2-4Q}, \quad \text{and} \quad Q' := -Q.
    \end{equation*}
    If we also let $D' := {P'}^2- 4Q'$, then
    \begin{equation*}
        \alpha' := \frac{P'+\sqrt{D'}}{2} = \frac{\sqrt{D}+P}{2} = \alpha,
    \end{equation*}
    and
    \begin{equation*}
        \beta' := \frac{P'-\sqrt{D'}}{2} = \frac{\sqrt{D}-P}{2} = -\beta.
    \end{equation*}
    Therefore, the Lucas sequence $U_n(P',Q')$ is given by
    \begin{equation*}
        U_n(P',Q') = \frac{{\alpha'}^n - {\beta'}^n}{\alpha' - \beta'} = \frac{\alpha^n-(-1)^n \beta^n}{P}.
    \end{equation*}
    This may be expressed in terms of the Lucas sequences as
    \begin{equation*}
         U_n(P',Q') = \begin{cases}
            \sqrt{D}U_n(P,Q)/P & \text{if } n \text{ even} \\
            V_n(P,Q)/P & \text{if } n \text{ odd}.
        \end{cases}
    \end{equation*}
    Similarly, the sequence $(V_n(P',Q'))$ is given by
    \begin{equation*}
         V_n(P',Q') = \begin{cases}
            V_n(P,Q)/P & \text{if } n \text{ even} \\
            \sqrt{D}U_n(P,Q)/P & \text{if } n \text{ odd}.
        \end{cases}
    \end{equation*}
    Employing this transformation in Equation~\eqref{eq:lucas_series_pos} when $k$ is odd, we obtain Equation~\eqref{eq:lucas_series_neg}. 
 To illustrate this, let us look at the following example which was exhibited in Jaipong et al. \cite{jaipong2023}.
    \begin{example}
If $P=\sqrt{5}$ and $Q=1$, then
\begin{equation*}
    U_n(\sqrt{5},1) = \begin{cases}
    \sqrt{5}F_{n} &\text{if } n \text{ even} \\
    L_{n} &\text{if } n \text{ odd}.
\end{cases}
\end{equation*}
With these values of $P$ and $Q$, Equation~\eqref{eq:lucas_series_pos} becomes
\begin{equation*}
    \sum_{n=2}^{\infty}\mathcal{L}\left(\frac{U^2_{k}}{U^2_{kn}}\right)  = \mathcal{L}\left(\frac{1}{\varphi^{2k}}\right).
\end{equation*}
If $k$ is odd, then we recover Example~\ref{ex:lucas_neg_fib} given above, which was originally obtained by setting $P=1$ and $Q = -1$.  Namely,
\begin{equation*}
    \sum_{n=1}^{\infty}\mathcal{L}\left(\frac{L_k}{5F^2_{2kn}}\right) + \sum_{n=1}^{\infty}\mathcal{L}\left(\frac{L_k}{L^2_{k(2n+1)}}\right)  = \mathcal{L}\left(\frac{1}{\varphi^{2k}}\right).
\end{equation*}
On the other hand, if $k$ is even then we find that
\begin{equation*}
    \sum_{n=2}^{\infty}\mathcal{L}\left(\frac{F^2_k}{F^2_{kn}}\right) = \mathcal{L}\left(\frac{1}{\varphi^{2k}}\right),
\end{equation*}
which is exactly the identity given in Example~\ref{ex:lucas_pos_fib}.
\end{example}
\section{Relation to Bridgeman's Theorem}
The sequences $(a_k)_{k \geq 1}$ and $(b_k)_{k \geq 1}$ appearing in Theorem~\ref{thm:bridgeman} are expressed in terms of powers of the quadratic irrational $u := a+b\sqrt{n}$, where $a,b \in \mathbb{Q}_{> 0}$.  In particular, they are defined as
\begin{equation*}
    a_k + b_k\sqrt{n} := (a+b\sqrt{n})^k.
\end{equation*}
We may express the sequences $(a_k)$ and $(b_k)$ in terms of the Lucas sequences $(U_k)$ and $(V_k)$ in the following way. Suppose that $P,Q \in \mathbb{Q}$ and that
\begin{equation*}
    \frac{P+\sqrt{P^2-4Q}}{2} = a+b\sqrt{n}.
\end{equation*}
Since $a,b \in \mathbb{Q}$, it must be the case that
\begin{equation*}
    P = 2a \quad \text{and} \quad \sqrt{4a^2-4Q} = 2b\sqrt{n}.
\end{equation*}
Thus, $Q$ is found to be
\begin{equation*}
    Q = a^2 - n b^2.
\end{equation*}
However, since Bridgeman's theorem requires that $(a,b)$ is a solution to one of Pell's equations
\begin{equation*}
    a^2 - n b^2 = \pm 1,
\end{equation*}
we find that $Q = \pm 1$.  Therefore, $P = 2a$ and $Q = \pm 1$, dependent on whether $(a,b)$ is a solution to the positive or negative Pell equation, respectively.  Next, recall that 
\begin{equation*}
    \left(\frac{P+\sqrt{P^2-4Q}}{2}\right)^k = \frac{V_k(P,Q) + U_k(P,Q)\sqrt{P^2-4Q}}{2}.
\end{equation*}
Comparing this with the definition of $a_k$ and $b_k$, we see that
\begin{equation*}
    V_k(2a,\pm 1)/2 + b U_k(2a,\pm 1) \sqrt{n} = a_k + b_k \sqrt{n},
\end{equation*}
and therefore,
\begin{equation*}
    a_k = \frac{1}{2}V_k(2a,\pm 1) \quad \text{and} \quad b_k = b U_k(2a,\pm 1).
\end{equation*}
Using these expressions for $a_k$ and $b_k$, Equation~\eqref{eq:bridgeman_pos} of Theorem~\ref{thm:bridgeman} may be rewritten as
\begin{equation}
    \mathcal{L}\left(\frac{1}{u^2}\right) = \sum_{k=2}^{\infty}\mathcal{L}\left(\frac{1}{U^2_k(2a,1)}\right), \label{eq:bridgeman-rewritten_a}
\end{equation}
while Equation~\eqref{eq:bridgeman_neg} may be expressed as
\begin{equation*}
    \mathcal{L}\left(\frac{1}{u^2}\right) = \sum_{k=1}^{\infty}\mathcal{L}\left(\frac{a^2}{n b^2 U^2_{2k}(2a,-1)}\right) + \sum_{k=1}^{\infty}\mathcal{L}\left(\frac{4a^2}{V^2_{2k+1}(2a,-1)}\right). 
\end{equation*}
Moreover, by using the fact that $V_1(2a,-1) = 2a$ and $D := P^2-4Q = 4b^2n$, the previous line may be written as
\begin{equation}
    \mathcal{L}\left(\frac{1}{u^2}\right) = \sum_{k=1}^{\infty}\mathcal{L}\left(\frac{V_1(2a,-1)}{D U^2_{2k}(2a,-1)}\right) + \sum_{k=1}^{\infty}\mathcal{L}\left(\frac{V_1(2a,-1)}{V^2_{2k+1}(2a,-1)}\right). \label{eq:bridgeman-rewritten_b}
\end{equation}
It is then easy to see that Equations~\eqref{eq:bridgeman-rewritten_a} and \eqref{eq:bridgeman-rewritten_b} correspond to Equations~\eqref{eq:lucas_series_pos} and \eqref{eq:lucas_series_neg} of Theorem~\ref{thm:generalized_bridgeman}, with $k = 1$, and $Q=\pm1$, respectively.

\vskip 20pt\noindent {\bf Acknowledgment.}
The author would like to thank the anonymous referee and the editors, whose suggestions and corrections improved the quality of this paper.

\end{document}